\documentclass{amsart}
\usepackage{mathrsfs}
\usepackage{amssymb}
\usepackage{latexsym}
\usepackage{yfonts}
\usepackage{natbib}
\usepackage{graphicx}

\bibpunct{[}{]}{;}{n}{,}{,}

\newtheorem{te}{Theorem}[section]

\theoremstyle{lem}

\theoremstyle{coro}

\theoremstyle{definition}

\theoremstyle{os}
\newtheorem{os}[te]{Remark}

\numberwithin{equation}{section}

\allowdisplaybreaks

\begin{document}

\title[Higher-order equations]{Probabilistic representation of fundamental solutions to $\frac{\partial u}{\partial t} = \kappa_m \frac{\partial^m u}{\partial x^m}$.}

\author{Enzo Orsingher} 
\address{Dipartimento of Scienze Statistiche, Sapienza University of Rome}
\email{enzo.orsingher@uniroma1.it}

\author{Mirko D'Ovidio}
\address{Dipartimento di Scienze di Base e Applicate per l'Ingegneria, Sapienza University of Rome}
\email{mirko.dovidio@uniroma1.it}
 
\keywords{Pseudo-process, higher-order heat equation, Airy functions, Cauchy distribution, stable laws, fractional diffusion equations}

\date{\today}

\subjclass[2000]{Primary }

\begin{abstract}
For the fundamental solutions of heat-type equations of order $n$ we give a general stochastic representation in terms of damped oscillations with generalized gamma distributed parameters. By composing the pseudo-process $X_n$ related to the higher-order heat-type equation with positively  skewed stable r.v.'s $T^j_\frac{1}{3}$, $j=1,2, \ldots , n$ we obtain genuine r.v.'s whose explicit distribution is given for $n=3$ in terms of Cauchy asymmetric laws. We also prove that $X_3(T^1_\frac{1}{3}(\ldots (T^n_\frac{1}{3}(t))\ldots))$ has a stable asymmetric law.
\end{abstract}

\maketitle

\section{Introduction}
The problem of studying the form of fundamental solutions of higher-order heat equations of the form
\begin{equation}
\frac{\partial u_m}{\partial t}(x,t) = \kappa_{m} \frac{\partial^m u_m}{\partial x^m}(x,t), \quad x \in \mathbb{R}, \; t>0, \quad m \geq 2 \label{eqNord}
\end{equation}
with
\begin{equation*}
u_m(x,0)=\delta(x)
\end{equation*}
where
\begin{equation*}
\kappa_{m} = \left\lbrace \begin{array}{ll} (-1)^{m/2 + 1} & \textrm{if } $m$ \textrm{ is even}\\ \pm 1 & \textrm{if } $m$ \textrm{ is odd}\end{array} \right .
\end{equation*}
has been tackled in some particular cases by mathematicians of the caliber of \citet{Bernst19, lev23, pol39} and \citet{burwell23}. By applying the steepest descent method some recent papers by \citet{liWong1993}, \citet{AccettaO}, \citet{LCH03} have explored the form of the fundamental solutions of equation \eqref{eqNord}. The aim of this note is to give an explicit representation of the solutions to \eqref{eqNord} for the case where the order of the equation is odd, alternative to the inverse Fourier transform 
\begin{equation}
u_m(x,t) = \frac{1}{2\pi} \int_{\mathbb{R}} e^{-i \beta x + t (-i \beta)^m \kappa_m} d\beta
\end{equation} 
and capable of representing the sign-varying behavior of the fundamental solutions to \eqref{eqNord}. Our result is that the fundamental solutions to \eqref{eqNord} have the probabilistic representation
\begin{equation}
u_{2n+1}(x,t) = \frac{1}{x \pi} E\left[ e^{-b_nxG^{2n+1}(1/t)} \sin\left( a_nxG^{2n+1}(1/t) \right) \right] , \quad x\in \mathbb{R},\; t>0 \label{solIntro1}
\end{equation}
in the odd-order case, and
\begin{equation}
u_{2n}(x,t) = \frac{1}{\pi x} E \Big[ \sin\left(x G^{2n}(1/t) \right) \Big] , \quad x \in \mathbb{R}, \; t>0\label{solIntro2}
\end{equation}
for the even-order case. In \eqref{solIntro1} and \eqref{solIntro2} by $G^\gamma(t)$ we denote the generalized  gamma r.v. with density 
\begin{equation*}
g^{\gamma}(x,t) = \gamma \frac{x^{\gamma -1}}{t}\exp\left(- \frac{x^\gamma}{t} \right), \quad x,t>0, \; \gamma >0.
\end{equation*}
The parameters $a_n,b_n$ appearing in \eqref{solIntro1} and \eqref{solIntro2} are
\begin{align*}
a_n=\cos \frac{\pi}{2(2n+1)}, \quad b_n = \sin \frac{\pi}{2(2n+1)}.
\end{align*}
Results \eqref{solIntro1} and \eqref{solIntro2} show that the fundamental solutions have an oscillating behavior which has been explored in several papers by many researchers. In our view our result represents a concluding picture of the solutions to higher-order heat equations. For all values of the degree $n$ of the equation \eqref{eqNord} we have solutions  which have the behavior of damped oscillations where the probabilistic ingredients (the generalized gamma or Weibull-type distributions) depend only on $n \in \mathbb{N}$. An alternative universal representation of the fundamental solution in the odd-order case reads
\begin{equation}
u_{2n+1}(x,t) = -\frac{1}{\pi x} \sum_{k=1}^{\infty} \left( \frac{-x}{\sqrt[2n+1]{t}} \right)^{k} \frac{1}{k!} \sin \frac{n \pi k}{2n+1} \Gamma\left( 1+ \frac{k}{2n+1} \right).
\end{equation} 
Functions $u_{2n+1}$ display ascillations which fade off as the degree $2n+1$ of the equation increases. A special attention has been devoted to third-order equations where we have that
\begin{align}
u_3(x,t) = & \frac{1}{\sqrt[3]{3t}} Ai\left( \frac{x}{\sqrt[3]{3t}} \right), \quad x \in \mathbb{R},\, t>0\\
= & \frac{3t}{\pi x} \int_{0}^\infty e^{-\frac{xy}{2}} \sin \left(\frac{\sqrt{3}}{2}xy\right)\, y^2 e^{-t y^3}\, dy\\
= & - \frac{1}{\pi x} \sum_{k=1}^{\infty} \left( -\frac{x}{\sqrt[3]{t}} \right)^k \frac{1}{k!} \sin \frac{\pi k}{3} \Gamma\left( 1+ \frac{k}{3} \right).
\end{align}
In the fourth-order case (biquadratic heat-equation) in \citet{DO3} we have shown that
\begin{align*}
u_4(x,t) = & \frac{1}{2\pi} \int_{-\infty}^{+\infty} e^{-\frac{y^4 t}{2^2}} \cos\left(xy \right)dy \\
= & \frac{1}{2\pi \sqrt{2t^{1/2}}} \sum_{k=0}^{\infty} \frac{(-1)^k }{(2k)!} \left( - \frac{\sqrt{2}|x|}{t^{1/4}} \right)^{2k} \Gamma\left( \frac{k}{2} + \frac{1}{4} \right). 
\end{align*}

In a recent paper we have shown that the composition of an odd-order pseudo-process $X_{2n+1}$ with a positivly skewed stable r.v. $T_\frac{1}{2n+1}$ of order $\frac{1}{2n+1}$ yields a genuine r.v. with asymmetric Cauchy distribution, that is 
\begin{equation}
Pr\{ X_{\frac{1}{2n+1}}(T_{\frac{1}{2n+1}}(t)) \in dx \}= \frac{t\cos \frac{\pi}{2(2n+1)}}{\pi \left[  (x + t\sin \frac{\pi}{2 (2n+1)})^2 + t^2 \cos^2 \frac{\pi}{2(2n+1)}\right]}dx. \label{lab110}
\end{equation}
For $n=1$ from \eqref{lab110} we can extract a very interesting relationship for the Airy function which reads
\begin{align*}
& Pr\{ X_{\frac{1}{3}}(T_{\frac{1}{3}}(t)) \in dx \} /dx= \\
& \int_0^\infty \frac{1}{\sqrt[3]{3s}} Ai\left( \frac{x}{\sqrt[3]{3s}}\right) \frac{t}{s} \frac{1}{\sqrt[3]{3s}}Ai\left( \frac{t}{\sqrt[3]{3s}}\right)ds =  \frac{\sqrt{3}}{2\pi} \frac{t}{x^2 + xt + t^2}.
\end{align*}
We show here that the $m$-times iterated pseudo-process (with $T^j_{\frac{1}{2n+1}}$, $j=1,2, \ldots , m$ independent stable r.v.'s)
\begin{equation*}
Z_{m}(t) = X_{2n+1}(T^1_{\frac{1}{2n+1}}(T^2_{\frac{1}{2n+1}}(\ldots (T^m_{\frac{1}{2n+1}}(t)) \ldots))), \quad t>0
\end{equation*}
is a stable r.v. of order $\frac{1}{(2n+1)^{m-1}}$ with characteristic function
\begin{equation}
Ee^{i\beta Z_{m}(t)} = \exp\left[ -t |\beta |^{\frac{1}{(2n+1)^{m-1}}} \left( \cos \frac{\pi}{2(2n+1)^m} + i \frac{\beta}{|\beta |} \sin \frac{\pi}{2(2n+1)^m} \right) \right]. \label{wwwintro}
\end{equation}
We have also explored the connection between solutions of fractional equations
\begin{equation*}
\frac{\partial^\alpha u}{\partial t^\alpha} + \frac{\partial u}{\partial x} = 0, \quad x>0,\; t>0
\end{equation*}
with the solutions of higher-order heat-type equations \eqref{eqNord} for $\alpha = \frac{1}{m}$, $m \in \mathbb{N}$.

\section{Pseudo-processes}
Some basic facts about the fundamental solutions of higher-order heat equations had been established many years ago essentially by applying the steepest descent method. In particular, \citet{liWong1993} have shown that the number of zeros is infinite for solutions to even-order equations. The steepest descent method was applied by \citet{AccettaO} for the analysis of the third-order equation. The oscillating behavior of the solutions of higher-order heat-type equations is confirmed by our analysis. Furthermore, for the odd-order case our results show that the asymmetry of solutions decreases as the order $2n+1$ increases. The result of Theorem \ref{te21} below shows that solutions of all odd-order heat equations can be constructed by means of damped oscillating functions  with gamma distributed parameters.

We pass now to our principal result.
\begin{te}
The solution to
\begin{equation}
\left\lbrace 
\begin{array}{ll}
\frac{\partial u}{\partial t} = (-1)^n \frac{\partial^{2n+1} u}{\partial x^{2n+1}}, & x\in \mathbb{R},\; t>0\\
u(x,0) = \delta(x)
\end{array} \right .
\label{eqN}
\end{equation}
is given by 
\begin{align}
u_{2n+1}(x,t) = \frac{1}{x \pi} E\left[ e^{-b_nxG^{2n+1}(1/t)} \sin\left( a_nxG^{2n+1}(1/t) \right) \right]  \label{soleqN}
\end{align}
where, $\forall \, t>0$, the r.v. $G^{\gamma}(t)$ has the generalized gamma distribution
\begin{equation*}
g^{\gamma}(x,t) = \gamma \frac{x^{\gamma -1}}{t}\exp\left(- \frac{x^\gamma}{t} \right), \quad x,t>0, \; \gamma >0.
\end{equation*}
and
\begin{align*}
a_n=\cos \frac{\pi}{2(2n+1)}, \quad b_n = \sin \frac{\pi}{2(2n+1)}
\end{align*}
\label{te21}
\end{te}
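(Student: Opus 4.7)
The plan is to start from the inverse Fourier representation $u_{2n+1}(x,t)=\frac{1}{2\pi}\int_{\mathbb{R}}e^{-i\beta x-it\beta^{2n+1}}d\beta$ displayed in the Introduction, and to convert it into the expectation \eqref{soleqN} by a contour rotation in the complex $\beta$-plane. First I would fold the integral via $\beta\mapsto-\beta$ to write $u_{2n+1}(x,t)=\frac{1}{\pi}\Re\int_0^\infty e^{i(\beta x+t\beta^{2n+1})}d\beta$, and then rotate the positive real axis onto the ray $\beta=re^{i\theta}$ with $\theta=\frac{\pi}{2(2n+1)}$. This angle is chosen precisely so that $it\beta^{2n+1}=itr^{2n+1}e^{i\pi/2}=-tr^{2n+1}$, so the wild oscillation is traded for the Weibull-type weight $e^{-tr^{2n+1}}$, which, up to the factor $(2n+1)tr^{2n}$, is the density $g^{2n+1}(r,1/t)$ of $G^{2n+1}(1/t)$.

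After the rotation the measure $d\beta$ acquires the factor $e^{i\theta}=a_n+ib_n$, and $i\beta x=-b_nxr+ia_nxr$, which already explains the appearance of the constants $a_n,b_n$ and of the product $e^{-b_nxr}\sin(a_nxr)$. Taking real parts yields the intermediate representation $u_{2n+1}(x,t)=\frac{1}{\pi}\int_0^\infty e^{-b_nxr}[a_n\cos(a_nxr)-b_n\sin(a_nxr)]e^{-tr^{2n+1}}dr$. The closing step is purely computational: the square bracket equals $\frac{1}{x}\partial_r[e^{-b_nxr}\sin(a_nxr)]$, and integrating by parts against $e^{-tr^{2n+1}}$ (the boundary terms vanish because $\sin 0=0$ and $e^{-tr^{2n+1}}\to 0$ at infinity) transfers the derivative onto the exponential and produces $(2n+1)tr^{2n}e^{-tr^{2n+1}}=g^{2n+1}(r,1/t)$, delivering the claimed expression $\frac{1}{\pi x}E[e^{-b_nxG^{2n+1}(1/t)}\sin(a_nxG^{2n+1}(1/t))]$.

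The main obstacle is justifying the contour deformation uniformly in $x\in\mathbb{R}$. On a closing quarter-arc $\beta=Re^{i\phi}$, $\phi\in[0,\theta]$, the modulus of the integrand is $\exp(-xR\sin\phi-tR^{2n+1}\sin((2n+1)\phi))$; for $x<0$ the first term is growing, so a Jordan-type estimate is needed to show that $tR^{2n+1}\sin((2n+1)\phi)\geq c_ntR^{2n+1}\phi$ dominates the growth $|x|R\phi$ for $R$ large, making the arc contribution $o(1)$. Finally, the initial datum $u_{2n+1}(x,0)=\delta(x)$ can be checked either directly from the Fourier form or from the self-similar scaling $G^{2n+1}(1/t)\stackrel{d}{=}t^{-1/(2n+1)}G^{2n+1}(1)$, which yields $u_{2n+1}(x,t)=t^{-1/(2n+1)}u_{2n+1}(xt^{-1/(2n+1)},1)$ and concentrates mass at the origin as $t\downarrow 0$.
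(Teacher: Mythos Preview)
Your argument is correct, and it is a genuinely different route from the paper's. The paper proceeds in the opposite direction: it \emph{verifies} formula \eqref{soleqN} by computing its spatial Fourier transform. Writing the sine as a difference of exponentials and recognising the Heaviside integral $H_\beta(w)=\frac{1}{2\pi}\int_{\mathbb{R}}\frac{e^{iwx-i\beta x}}{ix}\,dx$, the authors reduce the Fourier transform to $i(2n+1)t\int_\beta^\infty w^{2n}e^{-itw^{2n+1}}\,dw$, which they then evaluate either by Laplace-transforming in $t$ or by inserting a convergence factor $e^{-\mu w^{2n+1}}$ and letting $\mu\downarrow 0$; in both cases the answer is $e^{-it\beta^{2n+1}}$, matching the Fourier transform of the fundamental solution. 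The key algebraic fact $(a_n\pm ib_n)^{2n+1}=\pm i$ that they invoke when changing variables is the same identity that, in your approach, dictates the rotation angle $\theta=\frac{\pi}{2(2n+1)}$.

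What each approach buys: your contour-rotation argument is constructive --- it shows how the damped-oscillation representation emerges from the oscillatory Fourier integral --- and the integration by parts step makes transparent why the Weibull weight $(2n+1)tr^{2n}e^{-tr^{2n+1}}$ appears. The cost is the Jordan-type arc estimate for $x<0$, which you correctly identify and handle. The paper's verification sidesteps the contour analysis entirely, trading it for the (somewhat formal) Heaviside representation and a regularisation in $t$; this is arguably quicker but less illuminating as to the origin of the formula.
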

\begin{proof}
We start by evaluating the Fourier transform of \eqref{soleqN}
\begin{align}
& \int_{\mathbb{R}} e^{i \beta x} u_{2n+1}(x,t) dx\label{ftNord}\\
= & \int_{\mathbb{R}} e^{i \beta x} dx \int_0^\infty \frac{(2n+1) t}{\pi x} e^{-b_n x w} \sin(a_n xw) w^{2n} e^{-t w^{2n+1}} dw\nonumber \\
= & (2n+1)t \int_0^\infty w^{2n} e^{-t w^{2n+1}} \int_{\mathbb{R}} \frac{e^{i \beta x + i a_n x w- b_n xw} - e^{i \beta x -i a_n xw - b_n xw}}{2\pi ix}dx\, dw\nonumber \\
= & (2n+1) t \int_0^\infty w^{2n} e^{-t w^{2n+1}}\left[ H_{\beta}(w(a_n-ib_n)) - H_{\beta}(-w(a_n+ib_n)) \right] dw \nonumber
\end{align}
where in the last step we used the integral representation of the Heaviside function
\[ H_{y}(x) = - \frac{1}{2\pi} \int_{\mathbb{R}} e^{-iwx} \frac{e^{iyw}}{iw} dw = \frac{1}{2\pi} \int_{\mathbb{R}} e^{iwx} \frac{e^{-iyw}}{iw} dw \]
By a change of variable, the Fourier transform \eqref{ftNord} takes the form
\begin{align}
\int_{\mathbb{R}} e^{i \beta x} u_{2n+1}(x,t) dx = & \frac{(2n+1)t}{(a_n-ib_n)^{2n+1}} \int_0^\infty w^{2n} e^{-t \left(\frac{w}{a_n-ib_n} \right)^{2n+1}}H_{\beta}(w) dw\notag \\
& - \frac{(2n+1)t}{(a_n+ib_n)^{2n+1}} \int_0^\infty w^{2n} e^{-t \left( \frac{w}{a_n+ib_n} \right)^{2n+1}} H_{\beta}(-w)dw\notag \\
= & i (2n+1)t \int_{0}^\infty w^{2n} e^{-i tw^{2n+1} }H_{\beta}(w)dw\notag \\
& + i (2n+1)t  \int_0^\infty w^{2n} e^{it w^{2n+1}} H_{\beta}(-w) dw\notag \\
= & i (2n+1)t \int_{-\infty}^{+\infty} w^{2n} e^{-i tw^{2n+1} }H_{\beta}(w)dw\notag \\
= & i (2n+1)t \int_{\beta}^{+\infty} w^{2n} e^{-i tw^{2n+1} } dw. \label{intabove}
\end{align}
In the above steps we used the fact that
\begin{equation*}
(a_n+ib_n)^{2n+1}=i, \quad \textrm{and} \quad (a_n-ib_n)^{2n+1}=-i.
\end{equation*}
The integral \eqref{intabove} can be performed in two different ways. First we can take the Laplace transform 
\begin{align*}
\int_0^\infty e^{-\mu t} \left( \int_{\mathbb{R}} e^{i \beta x} u_{2n+1}(x, t) dx \right) dt = & \int_{\beta}^{\infty}  w^{2n} \int_0^\infty e^{-(\mu + i w^{2n+1})t} it (2n+1)dt\, dw\\
= & \int_{\beta}^\infty \frac{i (2n+1) w^{2n} dw}{(\mu + i w^{2n+1})^2}\\
= & \frac{1}{\mu + i \beta^{2n+1}} \\
= & \int_0^\infty e^{-\mu t} e^{-i t \beta^{2n+1}}dt.
\end{align*}
This shows that
\begin{align*}
\int_{-\infty}^{+\infty} e^{i\beta x} u_{2n+1}(x, t)dx = e^{-i t \beta^{2n+1}}.
\end{align*}
We can arrive at the some result by means of the following trick
\begin{align*}
\int_{-\infty}^{+\infty} e^{i\beta x} u_{2n+1}(x, t)dx = & \lim_{\mu \to 0} \left[ i (2n+1)t \int_{\beta}^{\infty} w^{2n} e^{-i tw^{2n+1} - \mu w^{2n+1} }dw \right] \\
= & \lim_{\mu \to 0 } \frac{it}{\mu + it} e^{- (i t  + \mu) \beta^{2n+1}}\\
= & e^{-i t \beta^{2n+1}}.
\end{align*}
We have thus shown that the Fourier transform of \eqref{soleqN} coincides with the Fourier transform of the solution to the Cauchy problem \eqref{eqN}.
\end{proof}

For the special case of the third-order heat equation we have the following result.
\begin{te}
The solution of the Cauchy problem
\begin{equation}
\left\lbrace \begin{array}{ll} \frac{\partial u}{\partial t} = - \frac{\partial^3 u}{\partial x^3}, \quad x \in \mathbb{R},\, t>0\\
u(x,0) = \delta(x) \end{array} \right .
\end{equation}
can be written as
\begin{align}
u_3(x,t) = & \frac{1}{x \pi} E\left[ e^{-\frac{x}{2}G^{3}(1/t)} \sin\left( \frac{3^{1/2} x}{2}G^{3}(1/t) \right) \right]\\
= & \frac{3t}{\pi x} \int_{0}^\infty e^{-\frac{xy}{2}} \sin \left(\frac{\sqrt{3}}{2}xy\right)\, y^2 e^{-t y^3}\, dy\\
= & \frac{1}{\sqrt[3]{3t}} Ai\left( \frac{x}{\sqrt[3]{3t}} \right), \quad x \in \mathbb{R},\, t>0.
\end{align}
\end{te}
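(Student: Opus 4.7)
The first equality is almost a matter of unpacking notation. Specializing Theorem \ref{te21} to $n=1$ gives $a_1 = \cos(\pi/6) = \sqrt{3}/2$ and $b_1 = \sin(\pi/6) = 1/2$, and the density of $G^3(1/t)$ reads $g^3(y,1/t) = 3t\,y^2 e^{-ty^3}$ for $y>0$. Writing the expectation in \eqref{soleqN} as an integral against this density yields the second expression at once, so the first two lines coincide.

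For the passage to the Airy form my plan is to invoke Fourier uniqueness. Theorem \ref{te21} already certifies that the previous two expressions have Fourier transform $e^{-it\beta^3}$, so it is enough to verify the same for $v(x,t) := (3t)^{-1/3} Ai\!\left(x/(3t)^{1/3}\right)$. Starting from the classical integral representation $Ai(x) = \frac{1}{2\pi}\int_{\mathbb{R}} e^{i(k^3/3 + kx)}\,dk$ and reading it as a Fourier inversion, I obtain $\int_{\mathbb{R}} e^{-ikx}Ai(x)\,dx = e^{ik^3/3}$; replacing $k$ by $-\beta$ gives the identity $\int_{\mathbb{R}} e^{i\beta x} Ai(x)\,dx = e^{-i\beta^3/3}$. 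The scaling substitution $y = x/(3t)^{1/3}$ then produces
\begin{equation*}
\int_{\mathbb{R}} e^{i\beta x} v(x,t)\,dx = \int_{\mathbb{R}} e^{i\beta (3t)^{1/3} y} Ai(y)\,dy = e^{-i(3t)\beta^3/3} = e^{-it\beta^3},
\end{equation*}
which matches and closes the chain of equalities by Fourier uniqueness.

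No step looks seriously obstructive. The only point that needs a sentence of justification is that the integral representation in the middle line converges for every $x\in\mathbb{R}$, not just for $x \geq 0$: this is clear because the factor $e^{-ty^3}$ dominates $e^{-xy/2}$ for large $y$ whatever the sign of $x$, and the same observation legitimates the Fubini exchange implicit in rewriting the expectation as an ordinary integral. An alternative, contour-theoretic route would be to write $e^{-xy/2}\sin(\sqrt{3}\,xy/2) = \mathrm{Im}\,\exp(e^{2\pi i/3}xy)$ and deform the contour in the complex $y$-plane to the ray $\arg y = -\pi/3$, turning the integrand into the standard oscillatory Airy kernel after a rescaling; this bypasses the Fourier detour but at the cost of a contour-deformation argument that I expect to be messier than the direct Fourier comparison sketched above.
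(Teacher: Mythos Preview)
Your argument is correct, but it follows a genuinely different route from the paper's own proof. You pass through Fourier space: having already established in Theorem~\ref{te21} that the damped-oscillation integral has Fourier transform $e^{-it\beta^{3}}$, you compute the Fourier transform of $(3t)^{-1/3}Ai(x/(3t)^{1/3})$ directly from the standard oscillatory representation of $Ai$ and match. The paper instead works entirely in real space: it starts from the power-series expansion of $Ai(w)$, rewrites $\Gamma(k/3+1)$ as $\int_0^\infty z^{k/3}e^{-z}\,dz$, and resums using the elementary identity $e^{x\cos\phi}\sin(x\sin\phi)=\sum_{k\ge 0}\frac{x^k}{k!}\sin k\phi$ with $\phi=2\pi/3$; this produces the integral $\frac{3t}{\pi x}\int_0^\infty e^{-xy/2}\sin(\tfrac{\sqrt{3}}{2}xy)y^2 e^{-ty^3}\,dy$ directly, without any appeal to Fourier uniqueness. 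Your Fourier comparison is shorter and more conceptual, but it leans on the distributional reading of the Airy integral and on uniqueness of tempered-distribution Fourier transforms; the paper's series manipulation is more hands-on and, as a by-product, sets up exactly the $\sum (-x/t^{1/(2n+1)})^k\sin(\cdot)\,\Gamma(\cdot)$ structure that is exploited again in the next theorem for general $n$.
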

\begin{proof}
It is convenient to work with the following series expansion of the Airy function (see \citet[formula (4.10)]{OB09})
\begin{align}
Ai(w) = & \frac{3^{-2/3}}{\pi} \sum_{k \geq 0} (3^{1/3} w)^k \frac{\sin\left(2\pi (k+1)/3 \right)}{k!} \Gamma\left( \frac{k+1}{3} \right)\label{uno8tto}\\
= & \frac{3^{-2/3}}{\pi} \sum_{k \geq 1} (3^{1/3} w)^{k-1} \frac{\sin\left(2\pi k/3 \right)}{(k-1)!} \Gamma\left( \frac{k}{3} \right)\nonumber \\
= & \frac{1}{w \pi} \sum_{k \geq 1} (3^{1/3} w)^{k} \frac{\sin\left(2\pi k/3 \right)}{k!} \Gamma\left( \frac{k}{3} +1 \right)\nonumber.
\end{align}
If we expand the function
\begin{align}
g(x, \phi) = & e^{x \cos \phi} \sin\left( x \sin \phi \right) = e^{x \cos \phi} \frac{e^{ix\sin \phi} - e^{-i x \sin \phi}}{2i} \label{uno11}\\
= & \frac{e^{xe^{i\phi}} - e^{x e^{-i\phi}}}{2i} = \frac{1}{2i} \left[ \sum_{k=0}^{\infty} \frac{\left( xe^{i\phi} \right)^k}{k!} - \sum_{k=0}^{\infty} \frac{\left( xe^{-i\phi} \right)^k}{k!} \right] \nonumber \\
= & \sum_{k=0}^{\infty} \frac{x^k}{k!} \frac{e^{i\phi k} - e^{-i\phi k}}{2i} = \sum_{k=0}^{\infty} \frac{x^k}{k!} \sin \phi k \nonumber
\end{align}
we establish a relationship which is useful in transforming \eqref{uno8tto} as
\begin{align*}
Ai(w) = & \frac{1}{w \pi} \sum_{k \geq 1} (3^{1/3} w)^{k} \frac{\sin\left(2\pi k/3 \right)}{k!} \Gamma\left( \frac{k}{3} +1 \right)\\
= & \frac{1}{w \pi} \sum_{k \geq 1} (3^{1/3} w)^{k} \frac{\sin\left(2\pi k/3 \right)}{k!} \int_0^\infty z^{k/3} e^{-z} dz\\
= & \frac{1}{w \pi} \int_0^\infty e^{-z}  \sum_{k \geq 1} ((3z)^{1/3} w)^{k} \frac{\sin\left(2\pi k/3 \right)}{k!}\, dz\\
= & \frac{1}{w \pi} \int_0^\infty e^{-z} e^{- (3z)^{1/3} w/2} \sin\left( \frac{3^{5/6}}{2} z^{1/3} w \right)\, dz.
\end{align*}
Now we write
\begin{align}
\frac{1}{(3t)^{1/3}}Ai\left( \frac{x}{(3t)^{1/3}} \right) = & \frac{3t}{x \pi} \int_0^\infty z^2 \exp\left( -z^3 t - \frac{z x}{2} \right) \sin \left( \frac{3^{1/2}}{2} z\, x \right) \, dz.
\end{align}
From \eqref{soleqN}, for $n=1$ ($\gamma=3$), we write
\begin{align*}
u_3(x, t) = & \frac{3t}{\pi x} \int_0^\infty  e^{- xz\sin \frac{\pi}{6}} \sin \left( xz \cos \frac{\pi}{6} \right) \, z^2 e^{-t z^3}\, dz\\
= & (z^3t = y) =  \frac{1}{\pi x} \int_0^\infty e^{-y} e^{- \frac{x}{2}\sqrt[3]{\frac{y}{t}}} \sin \left( \frac{\sqrt{3}}{2} x \sqrt[3]{\frac{y}{t}}\right) dy.
\end{align*}
This proves, in a different way, that
\begin{equation*}
u_3(x, t) = \frac{1}{\sqrt[3]{3t}}Ai\left( \frac{x}{\sqrt[3]{3t}} \right).
\end{equation*}
\end{proof}

\begin{te}
We can write the fundamental solution $u_{2n+1}$ in the following alternative form
\begin{equation}
u_{2n+1}(x,t) = -\frac{1}{\pi x} \sum_{k=1}^{\infty} \left( \frac{-x}{\sqrt[2n+1]{t}} \right)^{k} \frac{1}{k!} \sin \frac{n \pi k}{2n+1} \Gamma\left( 1+ \frac{k}{2n+1} \right)
\end{equation}
\end{te}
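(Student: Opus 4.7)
The plan is to start from the probabilistic representation \eqref{soleqN} established in Theorem \ref{te21} and expand the integrand $e^{-b_nxw}\sin(a_nxw)$ as a power series in $xw$ by means of the identity derived in \eqref{uno11}. Choose $\phi$ so that $\cos\phi=-b_n$ and $\sin\phi=a_n$; since $a_n=\cos\frac{\pi}{2(2n+1)}$ and $b_n=\sin\frac{\pi}{2(2n+1)}$, this forces
\begin{equation*}
\phi=\frac{\pi}{2}+\frac{\pi}{2(2n+1)}=\frac{(n+1)\pi}{2n+1}.
\end{equation*}
Applying \eqref{uno11} with the variable $xw$ in place of $x$ then yields
\begin{equation*}
e^{-b_nxw}\sin(a_nxw)=\sum_{k=1}^{\infty}\frac{(xw)^k}{k!}\sin\frac{(n+1)\pi k}{2n+1},
\end{equation*}
the $k=0$ contribution vanishing because $\sin 0=0$.

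Next I would rewrite the trigonometric coefficient using $\frac{(n+1)\pi k}{2n+1}=\pi k-\frac{n\pi k}{2n+1}$, which gives
\begin{equation*}
\sin\frac{(n+1)\pi k}{2n+1}=(-1)^{k+1}\sin\frac{n\pi k}{2n+1},
\end{equation*}
so that the expansion becomes $-\sum_{k\ge 1}\frac{(-xw)^k}{k!}\sin\frac{n\pi k}{2n+1}$. Substituting into \eqref{soleqN} and exchanging sum with expectation (which is legitimate because the series converges absolutely and uniformly on compact sets, the ratio $\Gamma(1+k/(2n+1))/k!$ decaying super-polynomially) produces
\begin{equation*}
u_{2n+1}(x,t)=-\frac{1}{\pi x}\sum_{k=1}^{\infty}\frac{(-x)^k}{k!}\sin\frac{n\pi k}{2n+1}\,E\bigl[(G^{2n+1}(1/t))^k\bigr].
\end{equation*}

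The only computation left is the moment of the generalized gamma. With density $g^{2n+1}(y,1/t)=(2n+1)t\,y^{2n}e^{-ty^{2n+1}}$, the substitution $u=ty^{2n+1}$ gives $E[(G^{2n+1}(1/t))^k]=t^{-k/(2n+1)}\Gamma(1+k/(2n+1))$. Plugging this into the previous display immediately yields the asserted representation. There is no real obstacle: the one point demanding care is the choice of branch for $\phi$ — taking $\phi=(n+1)\pi/(2n+1)$ rather than $\pi/(2(2n+1))$ is essential, since only the former satisfies $\cos\phi=-b_n$, and then the reduction $\sin\frac{(n+1)\pi k}{2n+1}=(-1)^{k+1}\sin\frac{n\pi k}{2n+1}$ recasts the expansion in the form stated in the theorem.
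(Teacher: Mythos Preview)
Your proof is correct and follows essentially the same route as the paper: start from the representation \eqref{soleqN}, expand the oscillatory factor via the identity \eqref{uno11}, and then integrate term by term against the generalized gamma density to produce the $\Gamma(1+k/(2n+1))\,t^{-k/(2n+1)}$ factors. The only cosmetic difference is in how \eqref{uno11} is invoked: the paper first rewrites $b_n=\cos\frac{n\pi}{2n+1}$, $a_n=\sin\frac{n\pi}{2n+1}$ and applies \eqref{uno11} with angle $\phi=\frac{n\pi}{2n+1}$ and argument $-xy$ (the overall minus sign then coming from the oddness of $\sin$), whereas you take the supplementary angle $\phi=\frac{(n+1)\pi}{2n+1}$ with argument $xy$ and then reduce $\sin\frac{(n+1)\pi k}{2n+1}=(-1)^{k+1}\sin\frac{n\pi k}{2n+1}$; the two computations are equivalent.
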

\begin{proof}
From \eqref{soleqN} we have that
\begin{align*}
u_{2n+1}(x, t) = & \frac{(2n+1)t}{\pi x} \int_0^\infty e^{-xy \sin \frac{\pi}{2(2n+1)}} \sin \left( xy\cos\frac{\pi}{2(2n+1)} \right) y^{2n} e^{-t y^{2n+1}} dy\\
= & \frac{(2n+1)t}{\pi x} \int_0^\infty e^{-xy \cos \frac{n\pi}{(2n+1)}} \sin \left( xy\sin\frac{n\pi}{(2n+1)} \right) y^{2n} e^{-t y^{2n+1}} dy\\
= & (\textrm{by } \eqref{uno11} ) =  -\frac{(2n+1)t}{\pi x} \int_{0}^\infty y^{2n} e^{-t y^{2n+1}} \sum_{k=0}^{\infty} \frac{(-xy)^k}{k!} \sin \frac{n\pi k}{2n+1} dy\\
= & - \frac{1}{\pi x} \sum_{k=0}^\infty \left( \frac{-x}{t^{\frac{1}{2n+1}}} \right)^k \frac{1}{k!} \sin\left(\frac{n \pi k}{2n+1} \right) \Gamma\left(1 + \frac{k}{2n+1} \right)
\end{align*}
\end{proof}

\begin{os}
\normalfont
We note that
\[ u_{2n+1}(0,t) = \frac{1}{\pi t^{\frac{1}{2n+1}}} \sin \left( \frac{n\pi}{2n+1} \right) \Gamma\left( 1 + \frac{1}{2n+1}\right) \stackrel{n \to \infty}{\longrightarrow} \frac{1}{\pi} \]
\end{os}

\begin{os}
\normalfont
We are able to evaluate the semi-infinite integral 
$$ \int_0^\infty u_{2n+1}(x,t)\, dx $$
by means of the representation \eqref{soleqN}. Indeed, by considering that
\begin{align}
\int_{0}^{\infty} \frac{e^{-Bx}}{x} \sin\left( Ax \right) \, dx = & \operatorname{arctan} \left( \frac{A}{B} \right) \label{arctab} \\
= & \frac{\pi}{2} - \operatorname{arccot} \left( \frac{A}{B} \right) \notag
\end{align}
(see \citet[formula 3.941]{GR}) we get that
\begin{align}
\int_0^\infty u_{2n+1}(x,t)\, dx = & \int_0^\infty \frac{1}{x \pi} E\left[ e^{-bxG^{2n+1}(1/t)} \sin\left( axG^{2n+1}(1/t) \right) \right] \, dx\nonumber \\
= & \frac{1}{\pi} \left[ \frac{\pi}{2} - \operatorname{arccot} \left( \frac{a}{b} \right) \right]\nonumber \\
= & \frac{1}{\pi} \left[ \frac{\pi}{2} - \operatorname{arccot} \left( \frac{\cos \frac{\pi}{2} \frac{1}{2n+1}}{\sin \frac{\pi}{2} \frac{1}{2n+1}} \right) \right]\nonumber \\
= & \frac{1}{2} \left[ 1 -   \frac{1}{2n+1} \right]. \label{accordsInt}
\end{align}
which is in accord with Lachal \cite[formula 11]{LCH03}.
\end{os}

\begin{te}
The solution to
\begin{equation}
\frac{\partial u}{\partial t} = (-1)^{n+1} \frac{\partial^{2n}u}{\partial x^{2n}}, \quad x \in \mathbb{R}, \; t>0 \label{sdfR}
\end{equation}
with initial condition $u(x,0)=\delta(x)$ can be written as
\begin{equation}
u_{2n}(x,t) = \frac{1}{\pi x} E \Big[ \sin\left(x G^{2n}(1/t) \right) \Big] \label{solu2n}
\end{equation}
\end{te}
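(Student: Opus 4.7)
The plan is to mirror the Fourier-transform argument of Theorem \ref{te21}. The Cauchy problem \eqref{sdfR} with $u(x,0)=\delta(x)$ has Fourier transform $\hat u(\beta,t)=e^{-t\beta^{2n}}$, obtained immediately from $\widehat{\partial_x^{2n} u}=(-i\beta)^{2n}\hat u=(-1)^n\beta^{2n}\hat u$ together with the initial condition, and using $(-1)^{n+1}(-1)^n=-1$. It therefore suffices to verify that the right-hand side of \eqref{solu2n} has the same Fourier transform.

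Writing the expectation in \eqref{solu2n} against the density $g^{2n}(w,1/t)=2n\,t\,w^{2n-1}e^{-tw^{2n}}$, $w>0$, gives
\begin{equation*}
u_{2n}(x,t)=\int_0^{\infty}\frac{\sin(xw)}{\pi x}\,2n\,t\,w^{2n-1}e^{-tw^{2n}}\,dw.
\end{equation*}
Taking the Fourier transform in $x$ and exchanging the order of integration reduces the task to evaluating
\begin{equation*}
I(w,\beta)=\int_{\mathbb{R}}\frac{\sin(xw)}{\pi x}\,e^{i\beta x}\,dx,\qquad w>0,\;\beta\in\mathbb{R}.
\end{equation*}
Splitting $\sin(xw)=(e^{ixw}-e^{-ixw})/(2i)$ and using the Heaviside representation invoked in the proof of Theorem \ref{te21} (equivalently, recognising $\sin(xw)/(\pi x)$ as the inverse Fourier transform of the indicator of $[-w,w]$) gives $I(w,\beta)=1$ when $|\beta|<w$ and $0$ otherwise. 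Substituting back, the remaining $w$-integral is an exact antiderivative:
\begin{equation*}
\int_{\mathbb{R}} e^{i\beta x}\,u_{2n}(x,t)\,dx=\int_{|\beta|}^{\infty}2n\,t\,w^{2n-1}e^{-tw^{2n}}\,dw=e^{-t|\beta|^{2n}}=e^{-t\beta^{2n}},
\end{equation*}
the last equality holding because $2n$ is even. This matches the Fourier transform of the solution to \eqref{sdfR} and proves \eqref{solu2n}.

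Compared with Theorem \ref{te21}, the even-order case requires no Laplace-transform-in-$t$ detour and no damping-parameter limiting trick: the combination of a density supported on $(0,\infty)$ with a kernel whose $x$-Fourier transform is the compactly supported indicator of $[-w,w]$ collapses the calculation to a single telescoping integral. The only technical point I expect to require care is the interchange of the $x$- and $w$-integrations, since $I(w,\beta)$ is only conditionally convergent as a principal-value integral; this can be handled by inserting a convergence factor $e^{-\varepsilon x^2}$ and letting $\varepsilon\downarrow 0$ at the end, exactly as in the Heaviside-representation step of the odd-order proof.
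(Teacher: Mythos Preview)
Your argument is correct, but the paper proves this theorem by a much shorter and more direct route. Rather than computing the Fourier transform of the proposed representation and matching it to $e^{-t\beta^{2n}}$, the paper starts from the known inverse Fourier transform
\[
u_{2n}(x,t)=\frac{1}{\pi}\int_0^\infty e^{-t\beta^{2n}}\cos(\beta x)\,d\beta
\]
and performs a single integration by parts in $\beta$: differentiating $e^{-t\beta^{2n}}$ and integrating $\cos(\beta x)$ produces exactly $\frac{2nt}{\pi x}\int_0^\infty \beta^{2n-1}e^{-t\beta^{2n}}\sin(\beta x)\,d\beta$, which is the expectation in \eqref{solu2n}. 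No Heaviside representation, no interchange of integrals, and no convergence-factor regularisation is needed. Your approach has the merit of being structurally parallel to the odd-order proof of Theorem~\ref{te21}, and the observation that $\frac{\sin(xw)}{\pi x}$ has Fourier transform $\mathbf{1}_{[-w,w]}$ is a clean way to see why the even case collapses so simply; the paper's approach, on the other hand, buys brevity by exploiting the fact that in the even case the starting integral representation is already real and absolutely convergent, so one integration by parts suffices.
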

\begin{proof}
The solution to \eqref{sdfR} is given by  
$$u_{2n}(x,t) = \frac{1}{\pi} \int_{0}^\infty e^{-t \beta^{2n}} \cos \beta x\, d\beta.$$
By integrating by parts we get that
\begin{equation*}
u_{2n}(x,t) = \frac{\sin \beta x}{\pi x} e^{-t \beta^{2n}} \Bigg|_{0}^{\infty} + \frac{2n t}{\pi x} \int_0^\infty \beta^{2n-1} e^{-t \beta^{2n}} \sin \beta x\, d\beta
\end{equation*}
and this concludes the proof.
\end{proof}

\begin{os}
\normalfont
The solution \eqref{solu2n} takes the following values in the origin
\begin{equation*}
u_{2n}(0,t) = \frac{1}{\pi} E G^{2n}(1/t) = \frac{1}{\pi t^{1/2n}} \Gamma\left( 1 + \frac{1}{2n} \right) \stackrel{n \to \infty}{\longrightarrow} \frac{1}{\pi}.
\end{equation*}
Clearly, in force of the symmetry and by considering formula \eqref{arctab}, we obtain that
$$ \int_{0}^{\infty} u_{2n}(x,t) dx = \frac{1}{2}.$$
\end{os}

\begin{os}
\normalfont
It is well-known that the solution to the fractional diffusion equation
\begin{equation}
\left\lbrace \begin{array}{l} \frac{\partial^{\nu} u}{\partial t^{\nu}} = \lambda^2 \frac{\partial^2 u}{\partial x^2}, \quad x \in \mathbb{R},\; t>0, \nu \in (0,2]\\ u(x,0)=\delta(x) \end{array} \right .
\label{eqWri}
\end{equation}
with 
$$u_t(x, 0)=0 \quad \textrm{for} \quad \nu \in (1,2]$$
is given by
\begin{align*}
u_{\nu}(x,t) = & \frac{1}{2\lambda t^{\nu/2}} W_{-\frac{\nu}{2} ,1-\frac{\nu}{2} } \left(- \frac{|x|}{\lambda t^{\nu/2}} \right)\\
= & \frac{1}{2\lambda t^{\nu/2}} \sum_{k=0}^{\infty} \frac{1}{k!} \left(-\frac{|x|}{\lambda t^{\nu/2}} \right) \frac{1}{\Gamma\left( 1 - \frac{\nu}{2}(1+k) \right)}\\
= & \frac{1}{2\pi \lambda t^{\nu/2}} \sum_{k=0}^{\infty} \frac{1}{k!} \left(-\frac{|x|}{\lambda t^{\nu/2}} \right) \Gamma\left( \frac{\nu}{2}(1+k)\right) \sin \left( \frac{\pi \nu}{2}(1+k)\right).
\end{align*}
The folded solution to the equation \eqref{eqWri} reads
\begin{equation*}
\bar{u}_{\nu}(x,t) = \left\lbrace \begin{array}{ll} 2u_{\nu}(x,t), & x\geq 0\\ 0, & x<0\end{array}  \right .
\end{equation*}
and for $\nu=2\alpha$, $\alpha \in (0,1)$, $\lambda=1$ becomes
\begin{align}
q_{\alpha}(x, t) = & \frac{1}{\pi t^{\alpha}} \sum_{r=0}^{\infty} \left(-\frac{x}{t^{\alpha}}\right)^r \frac{\Gamma(\alpha(r+1))}{r!} \sin\left(\pi \alpha(r+1) \right). \label{rvX}
\end{align} 
This represents a probability density of a r.v. $X(t)$ on the half-line $(0,\infty)$ which can be expressed in terms of positively skewed stable densities.
\begin{align*}
Pr\{ X(t) \in dx \} = & q_{\alpha}(x,t) dx = \left( \frac{x}{t^\alpha} = \frac{t}{y^{\alpha}} \right) = q_{\alpha}\left( \frac{t^{\alpha +1}}{y^{\alpha +1}} \right) \frac{\alpha t^{\alpha +1}}{y^{\alpha +1}}dy\\
= & \frac{1}{\pi t^{\alpha}} \sum_{k=0}^{\infty} \frac{1}{k!} \left( - \frac{t}{y^{\alpha}} \right)^k \Gamma(\alpha (1+k)) \sin \left( \pi \alpha (1+k) \right) \frac{\alpha t^{\alpha +1}}{y^{\alpha +1}}dy\\
=& \frac{\alpha t}{\pi y^{\alpha +1}} \sum_{k=0}^{\infty} \frac{1}{k!} \left(- \frac{t}{y^{\alpha}} \right)^{k} \Gamma(\alpha (1+k)) \sin \left( \pi \alpha (1+k) \right)dy\\
= & \frac{\alpha}{\pi} \sum_{k=0}^{\infty} \frac{1}{k!} \Gamma(\alpha (1+k)) y^{-\alpha(k+1)-1} t^{k+1} \sin\left( \pi \alpha (1+k)\right) dy\\
= & p_{\alpha}(y, \alpha, t) \, dy
\end{align*}
where in the last step the expression of the stable density
\begin{equation}
p_{\alpha}(x, \alpha , 1) = \frac{\alpha}{\pi} \sum_{k=0}^{\infty} \frac{1}{k!} \Gamma(\alpha(1+k)) x^{-\alpha (1+k)-1}\sin (\pi \alpha (1+k)), \quad x \in \mathbb{R}_+. \label{serieslawS1}
\end{equation}
The calculations above show that the r.v. $X(t)$ with distribution \eqref{rvX} can be expressed as
$$X(t) = t \left( \frac{t}{Y(t)}\right)^\alpha$$
where $Y(t)$ is a positively skewed stable-distributed r.v. of order $\alpha \in (0,1)$. In other words the stable law of $Y(t)$ is related to the folded solution of the fractional diffusion equation $X(t)$ in the sense that
$$ Y(t) \stackrel{i.d.}{=} t\left( \frac{t}{X(t)} \right)^{1/\alpha}.$$
This is because
\begin{equation*}
Pr\left\lbrace  Y(t) < y \right\rbrace  = Pr\left\lbrace  X(t) > \frac{t^{\alpha +1}}{y^{\alpha}} \right\rbrace  = \int_{\frac{t^{\alpha +1}}{y^{\alpha}}}^{\infty} p_{\alpha}(x,\alpha,t)dx.
\end{equation*}
We give also the Laplace transforms with respect to time $t$ and space $x$ of $q_{\alpha}(x,t)$ 
\begin{align}
\int_0^\infty e^{-\lambda x} q_{\alpha}(x,t) dx = & - \left[ - \frac{1}{\lambda t^{\alpha}} E_{-\alpha, 1-\alpha}\left( - \frac{1}{\lambda t^{\alpha}} \right) \right] \label{tlapISS}\\
= & E_{\alpha, 1}(- \lambda t^{\alpha}), \quad t>0 \notag
\end{align}
where in the last step, formula 
\begin{equation*}
-\frac{1}{x}E_{-\alpha, 1-\alpha}\left(\frac{1}{x} \right) = E_{\alpha, 1}(x)
\end{equation*}
has been applied (see formula $(5.1)$ of \citet{OB09EJP}). Furthermore, 
\begin{equation}
\int_0^\infty e^{-\mu t} q_{\alpha}(x,t) dt = \mu^{\alpha -1} e^{-x \mu^\alpha}, \quad x>0.
\end{equation}
Formulas above help to check that $q_{\alpha}$ satisfies the fractional equation
\begin{equation}
\frac{\partial^{\alpha} u}{\partial t^{\alpha}} + \frac{\partial u}{\partial x} = 0 \label{eqISS}
\end{equation}
with iniital condition $u(x,0)=\delta(x)$ where $\frac{\partial^\alpha}{\partial t^\alpha}$ is the Caputo fractional derivative.
\end{os}
\begin{os}
\normalfont
The double Laplace transform of \eqref{eqISS} reads
\begin{equation}
\mu^\alpha \mathcal{L}(\lambda, \mu) - \mu^{\alpha -1} = \mathcal{L}(\lambda, \mu)
\end{equation}
where
$$ \mathcal{L}(\lambda, \mu) = \int_0^\infty e^{-\mu t} \int_0^\infty e^{-\lambda x} q_{\alpha}(x, t)\, dx\, dt.$$
Since
\begin{equation}
\mathcal{L}(\lambda, \mu)= \frac{\mu^{\alpha -1}}{\mu^\alpha + \lambda} \label{xtlapISS}
\end{equation}
the $t$-inverse Laplace transform is \eqref{tlapISS} while the $x$-inverse Laplace transform can be obtained from \eqref{xtlapISS} as
\begin{equation}
\frac{\mu^{\alpha -1}}{\mu^{\alpha} + \lambda} = \mu^{\alpha -1} \int_0^\infty e^{-x(\lambda + \mu^{\alpha})} dx.
\end{equation}
\end{os}

\begin{os}
\normalfont
For $\alpha=1/2$, the formula \eqref{serieslawS1} yields
\begin{align}
p_{1/2}(x;1/2, 1) = & \frac{1}{2\pi} \sum_{r=0}^{\infty} (-1)^r \frac{\Gamma((r+1)/2)}{r!} x^{-\frac{(r+1)}{2}-1} \sin\left(\pi (r+1)/2 \right)\nonumber \\
= & \frac{1}{2\pi} \sum_{k=0}^{\infty}\left[  \frac{\Gamma\left( k + \frac{1}{2} \right)}{(2k)!} x^{-k -\frac{3}{2}} \sin\left(\pi k +  \frac{\pi}{2} \right) - \frac{\Gamma(k)}{(2k-1)!} x^{-k-1} \sin\left(\pi k \right)  \right]\nonumber \\
= & \frac{1}{2\pi x^{3/2}} \sum_{k=0}^{\infty}  \frac{\Gamma\left( k + \frac{1}{2} \right)}{(2k)!} x^{-k} \sin\left(\pi k + \frac{\pi}{2} \right)\nonumber \\
= & \frac{1}{2\pi x^{3/2}} \sum_{k=0}^{\infty}(-1)^k  \frac{\Gamma\left( k + \frac{1}{2} \right)}{(2k)!} x^{-k}\nonumber \\
= & \frac{1}{2\pi x^{3/2}} \sum_{k=0}^{\infty}\frac{(-1)^k}{(2k)!} 2^{1-2k} \frac{\Gamma(2k)}{\Gamma(k)} \sqrt{\pi} x^{-k}\nonumber \\
= & \frac{1}{2 \sqrt{\pi} x^{3/2}} \sum_{k=0}^{\infty} \frac{(-1)^k}{k!}  2^{-2k} x^{-k}\nonumber \\
= & \frac{1}{\sqrt{4 \pi x^3}} \exp\left(- \frac{1}{4x} \right) \label{firstpass}.
\end{align}
The result \eqref{firstpass} shows that the stable law \eqref{serieslawS1} for $\alpha=1/2$ coincides with the first-passage time of a standard Brownian motion through level $1/\sqrt{2}$.
\end{os}

\begin{os}
\normalfont
Two solutions to the third-order p.d.e
\begin{equation}
\frac{\partial u}{\partial t} = - \frac{\partial^3 u}{\partial x^3} \label{eq3ord}
\end{equation}
are given by
\begin{align*}
p(x,t) = & \frac{1}{\sqrt[3]{3t}} Ai\left( \frac{x}{\sqrt[3]{3t}}\right)\end{align*}
and
\begin{align*}
q(x,t) = & \frac{x}{t} \frac{1}{\sqrt[3]{3t}} Ai\left( \frac{x}{\sqrt[3]{3t}}\right) = \frac{x}{t}p(x,t).
\end{align*}
Indeed, we have that
\begin{align*}
\frac{\partial q}{\partial t}(x,t) = &- \frac{x}{t^2} p(x, t) + \frac{x}{t} \frac{\partial p}{\partial t}(x,t)
\end{align*}
and
\begin{align*}
\frac{\partial q}{\partial x}(x, t) = & \frac{1}{t}p(x, t) + \frac{x}{t} \frac{\partial p}{\partial x}(x,t),\\
\frac{\partial^2 q}{\partial x^2}(x, t) = & \frac{2}{t} \frac{\partial p}{\partial x}(x, t) + \frac{x}{t} \frac{\partial^2 p}{\partial x^2}(x, t),\\
\frac{\partial^3 q}{\partial x^3}(x, t) = & \frac{3}{t} \frac{\partial^2 p}{\partial x^2}(x,t) + \frac{x}{t}\frac{\partial^3 p}{\partial x^3}(x, t), \\
= & \frac{3}{t} \frac{\partial^2 p}{\partial x^2}(x,t) - \frac{x}{t}\frac{\partial p}{\partial t}(x, t)
\end{align*}
and therefore
\begin{equation*}
\frac{\partial q}{\partial t}(x,t) + \frac{\partial^3 q}{\partial x^3}(x, t) = \frac{1}{t^2} \left( 3t \frac{\partial^2 p}{\partial x^2}(x,t) - x p(x, t) \right).
\end{equation*}
By observing that
$$ \frac{\partial^2}{\partial x^2}\left[ \frac{1}{\sqrt[3]{3t}} y\left(\frac{x}{\sqrt[3]{3t}}\right) \right] = \frac{1}{3t}y^{\prime \prime}(z) \Bigg|_{z=\frac{x}{\sqrt[3]{3t}}} = \frac{1}{3t} y^{\prime \prime}\left( \frac{x}{\sqrt[3]{3t}} \right) $$
and the fact that  
$$ y^{\prime \prime}(z) - z y(z)=0 $$
that is, $y$ satisfies the Airy equation, we get that
$$\frac{\partial q}{\partial t}(x,t) + \frac{\partial^3 q}{\partial x^3}(x, t) =0.$$
By recursive arguments it can be shown that
\begin{equation}
f_m(x, t) = \left( \frac{x}{t} \right)^m \frac{1}{\sqrt[3]{3t}} Ai\left( \frac{x}{\sqrt[3]{3t}} \right)
\end{equation}
is also a solution to \eqref{eq3ord} for $m>1$.
\end{os}
\begin{os}
\normalfont
We have shown in a previous paper that the r.v.
\begin{equation}
Z(t) = X_3(T_\frac{1}{3}(t)) \label{eq228}
\end{equation}
obtained by composing the third-order pseudo-process $X_3$ with the stable subordinator $T_\frac{1}{3}$ with distribution
\begin{equation}
Pr\{ T_\frac{1}{3}(t) \in ds \} = ds \frac{t}{s} \frac{1}{\sqrt[3]{3s}} Ai\left( \frac{t}{\sqrt[3]{3s}} \right), \quad s,t>0 \label{eq229}
\end{equation}
possesses Cauchy distribution
\begin{align}
Pr\{Z(t) \in dx \} = & dx \int_0^\infty  \frac{1}{\sqrt[3]{3s}} Ai\left( \frac{x}{\sqrt[3]{3s}} \right) \frac{t}{s} \frac{1}{\sqrt[3]{3s}} Ai\left( \frac{t}{\sqrt[3]{3s}} \right)ds \notag \\
= & dx \frac{\sqrt{3} t}{2\pi (x^2+ xt + t^2)} \notag \\
= & dx \frac{\sqrt{3}}{2\pi}\frac{t}{(x+\frac{t}{2})^2 + \frac{3}{4}t^2}.\label{eq230}
\end{align}
Result \eqref{eq230} shows that \eqref{eq228} is a genuine r.v..The characteristic function of \eqref{eq230} is clearly 
\begin{equation}
\int_{-\infty}^{+\infty} e^{i\beta x} Pr\{ Z(t) \in dx \} = e^{- \frac{\sqrt{3}}{2} t |\beta | - i \frac{t}{2}\beta}.
\end{equation}
\end{os}

We have now the following generalization of the previous result for the composition of the pseudo-process $X_3$ with successively composed subordinators of order $\frac{1}{3}$.
\begin{te}
The r.v.
\begin{equation}
Z_n(t) = X_3(T^1_\frac{1}{3}(\ldots (T^n_\frac{1}{3}(t)) \ldots ))
\end{equation}
with $T^j_\frac{1}{3}$, $j=1,2, \ldots , n$, independent, positively skewed r.v.'s with law \eqref{eq229} has characteristic function
\begin{align}
Ee^{i \beta Z_n(t)} = & \exp \left[ -t |\beta |^{\frac{1}{3^{n-1}}} \left( \cos \frac{\pi}{2\cdot 3^n} + i \, sgn (\beta) \, \sin \frac{\pi}{2\cdot 3^n} \right)\right] \notag \\
= & \exp \left[- t |\beta |^{\frac{1}{3^{n-1}}} \cos \frac{\pi}{2\cdot 3^n} \left( 1 + i \, sgn (\beta) \, \tan \frac{\pi}{2\cdot 3^n} \right) \right] \label{eq233}
\end{align}
\end{te}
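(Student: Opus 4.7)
The plan is to argue by induction on $n$, using the Cauchy law of $Z_1(t)=X_3(T^1_{\frac{1}{3}}(t))$ established in the remark preceding the theorem as the base case, and propagating through the remaining subordinators by means of the Laplace transform of the positively skewed stable law of order $\frac{1}{3}$.

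For the base case $n=1$, the previous remark yields
\[
E e^{i\beta Z_1(t)} = e^{-\frac{\sqrt{3}}{2}t|\beta|-i\frac{t}{2}\beta} = \exp\!\left[-t|\beta|\left(\cos\frac{\pi}{6}+i\, sgn(\beta)\sin\frac{\pi}{6}\right)\right],
\]
which matches \eqref{eq233} at $n=1$. It is convenient to introduce the characteristic exponent
\[
\Psi_n(\beta) := |\beta|^{\frac{1}{3^{n-1}}}\exp\!\Bigl(i\, sgn(\beta)\,\frac{\pi}{2\cdot 3^n}\Bigr),
\]
so that \eqref{eq233} reads simply $E e^{i\beta Z_n(t)} = e^{-t\Psi_n(\beta)}$. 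Note that $|\arg \Psi_n(\beta)| = \frac{\pi}{2\cdot 3^n}<\frac{\pi}{2}$, so $\Psi_n(\beta)$ lies in the open right half-plane for every $n$ and every $\beta\ne 0$.

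For the inductive step I would exploit the distributional identity
\[
Z_{n+1}(t) \stackrel{d}{=} Z_n\bigl(T^{n+1}_{\frac{1}{3}}(t)\bigr),
\]
which is immediate from the definition of $Z_{n+1}$ and the independence of the subordinators. Conditioning on $T^{n+1}_{\frac{1}{3}}(t)$ and applying the inductive hypothesis,
\[
E e^{i\beta Z_{n+1}(t)} = E\!\left[\exp\bigl(-T^{n+1}_{\frac{1}{3}}(t)\,\Psi_n(\beta)\bigr)\right].
\]
The Laplace transform of the positively skewed $\frac{1}{3}$-stable subordinator is $E e^{-\lambda T_{\frac{1}{3}}(t)}=e^{-t\lambda^{1/3}}$ for $\lambda\ge 0$; since $\Psi_n(\beta)$ lies in the open right half-plane, this identity extends by analytic continuation (the integrand is uniformly bounded when $\mathrm{Re}\,\lambda\ge 0$, making both sides holomorphic, and the principal cube root is selected by continuity from the positive real axis). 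Hence $E e^{i\beta Z_{n+1}(t)} = \exp[-t\,\Psi_n(\beta)^{1/3}]$, and the polar form gives $\Psi_n(\beta)^{1/3}=|\beta|^{\frac{1}{3^n}}\exp(i\, sgn(\beta)\,\frac{\pi}{2\cdot 3^{n+1}}) = \Psi_{n+1}(\beta)$, closing the induction.

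The only delicate point is the extension of the Laplace transform of $T_{\frac{1}{3}}$ from $\mathbb{R}_+$ to complex arguments in the right half-plane with the correct branch of the cube root; because $|\arg \Psi_n(\beta)|$ stays strictly below $\frac{\pi}{2}$ at every inductive step, this extension is unproblematic, and the remaining work is bookkeeping in polar coordinates.
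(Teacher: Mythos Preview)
Your proof is correct and follows essentially the same route as the paper: iterated application of the Laplace transform $E e^{-\lambda T_{1/3}(t)}=e^{-t\lambda^{1/3}}$ at a complex argument in the right half-plane. The paper unfolds the full $n$-fold integral at once and starts from the Fourier transform $e^{-i\beta^3 s_1}$ of the Airy kernel rather than from the $n=1$ Cauchy case, but the computation is otherwise identical; your explicit handling of the analytic continuation and the branch of the cube root is a welcome addition that the paper leaves implicit.
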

\begin{proof}
We first observe that 
\begin{align*}
& Pr\{ Z_n(t) \in dx \}/dx \\
= & \int_0^\infty \ldots \int_0^\infty \frac{1}{\sqrt[3]{3s_1}} Ai\left( \frac{x}{\sqrt[3]{3s_1}} \right) \frac{s_2}{s_1}\frac{1}{\sqrt[3]{3s_1}} Ai\left( \frac{s_2}{\sqrt[3]{3s_1}} \right) \ldots \frac{t}{s_n}\frac{1}{\sqrt[3]{3s_n}} Ai\left( \frac{t}{\sqrt[3]{3s_n}} \right) \prod_{j=1}^n ds_j
\end{align*}
has Fourier transform
\begin{align*}
& \int_{-\infty}^{+\infty} e^{i\beta x} Pr\{ Z_n(t) \in dx \}\\
= & \int_0^\infty \ldots \int_0^\infty e^{-i \beta^3 s_1}\;  \frac{s_2}{s_1}\frac{1}{\sqrt[3]{3s_1}} Ai\left( \frac{s_2}{\sqrt[3]{3s_1}} \right) \ldots \frac{t}{s_n}\frac{1}{\sqrt[3]{3s_n}} Ai\left( \frac{t}{\sqrt[3]{3s_n}} \right) \prod_{j=1}^n ds_j\\
= & \int_0^\infty \ldots \int_0^\infty e^{- (i \beta^3)^\frac{1}{3} s_2}\;  \frac{s_3}{s_2}\frac{1}{\sqrt[3]{3s_2}} Ai\left( \frac{s_3}{\sqrt[3]{3s_2}} \right) \ldots \frac{t}{s_n}\frac{1}{\sqrt[3]{3s_n}} Ai\left( \frac{t}{\sqrt[3]{3s_n}} \right) \prod_{j=2}^n ds_j\\
= & \exp \left[ - t\left( e^{i \frac{\pi}{2}} \, sgn(\beta) \, |\beta |^3 \right)^{\frac{1}{3^n}}  \right]\\
= & \exp \left[ - t\left( e^{i \frac{\pi}{2}\, sgn(\beta) }  |\beta |^3 \right)^{\frac{1}{3^n}}  \right]\\
= & \exp\left[ -t |\beta |^{\frac{1}{3^{n-1}}} \left( \cos \frac{\pi}{2\cdot 3^n} + i \, sgn (\beta)\, \sin \frac{\pi}{2\cdot 3^n}   \right) \right].
\end{align*}
We observe that the characteristic function of a stable r.v. $S_\alpha(t)$ can be written as
\begin{align}
E e^{i \beta S_{\alpha}(t)} = & e^{-t |\beta |^\alpha e^{-i \frac{\pi \nu}{2}}\frac{\beta}{|\beta |}}\\
= & \exp\left[ -\sigma t |\beta |^\alpha \left( 1 - i \theta \frac{\beta }{|\beta |} \tan \frac{\pi \alpha }{2} \right) \right] \notag
\end{align}
where 
\begin{equation*}
\theta = \frac{\tan \frac{\pi \nu}{2}}{\tan \frac{\pi \alpha}{2}} \in [-1, 1]
\end{equation*}
and $\sigma=\cos \frac{\pi \nu}{2}>0$. In our case $\alpha = \frac{1}{3^{n-1}}$, $\nu=\frac{1}{3^n}$ and therefore $\sigma= \cos \frac{\pi}{2\cdot 3^n}$ and
\begin{equation}
\theta = \frac{\tan \frac{\pi}{2\cdot 3^n}}{\tan \frac{\pi }{2\cdot 3^{n-1}}}\label{asymT}
\end{equation}
and since $\theta \neq \pm 1$, the r.v. $Z_n$ is spread on the whole line with parameter of asymmetry equal to \eqref{asymT}.
\end{proof}

\begin{os}
\normalfont
We note that by adjusting the derivation of \eqref{eq233} we can obtain result \eqref{wwwintro} of the introduction.  
\end{os}

\begin{os}
\normalfont
The positively skewed stable r.v. $T_\alpha(t)$, $t>0$, $\alpha \in (0,1)$, with Laplace transform
\begin{equation*}
E e^{-\lambda T_\alpha(t)} = e^{-\lambda^\alpha t}
\end{equation*}
has characteristic function
\begin{align*}
E e^{i \beta T_\alpha(t)} =& E e^{- (-i \beta) T_\alpha(t)}\\
= & e^{- t (-i \beta)^\alpha }\\
= & \exp \left[-t |\beta |^\alpha \,  e^{-i \frac{\pi \alpha}{2} \, sgn(\beta)} \right]\\
= & \exp \left[-t |\beta |^\alpha \left( \cos \frac{\pi \alpha}{2} - i \, sgn(\beta) \, \sin \frac{\pi \alpha}{2}\right) \right]\\
= & \exp \left[- t |\beta |^\alpha   \cos \frac{\pi \alpha}{2} \left(1 - i \frac{\beta}{|\beta |} \tan \frac{\pi \alpha}{2}\right) \right]
\end{align*}
and therefore with asymmetric parameter $\theta=+1$ and $\sigma= \cos \frac{\pi \alpha }{2}$.
\end{os}

\begin{figure}[ht]
\centering
\includegraphics[scale=.7]{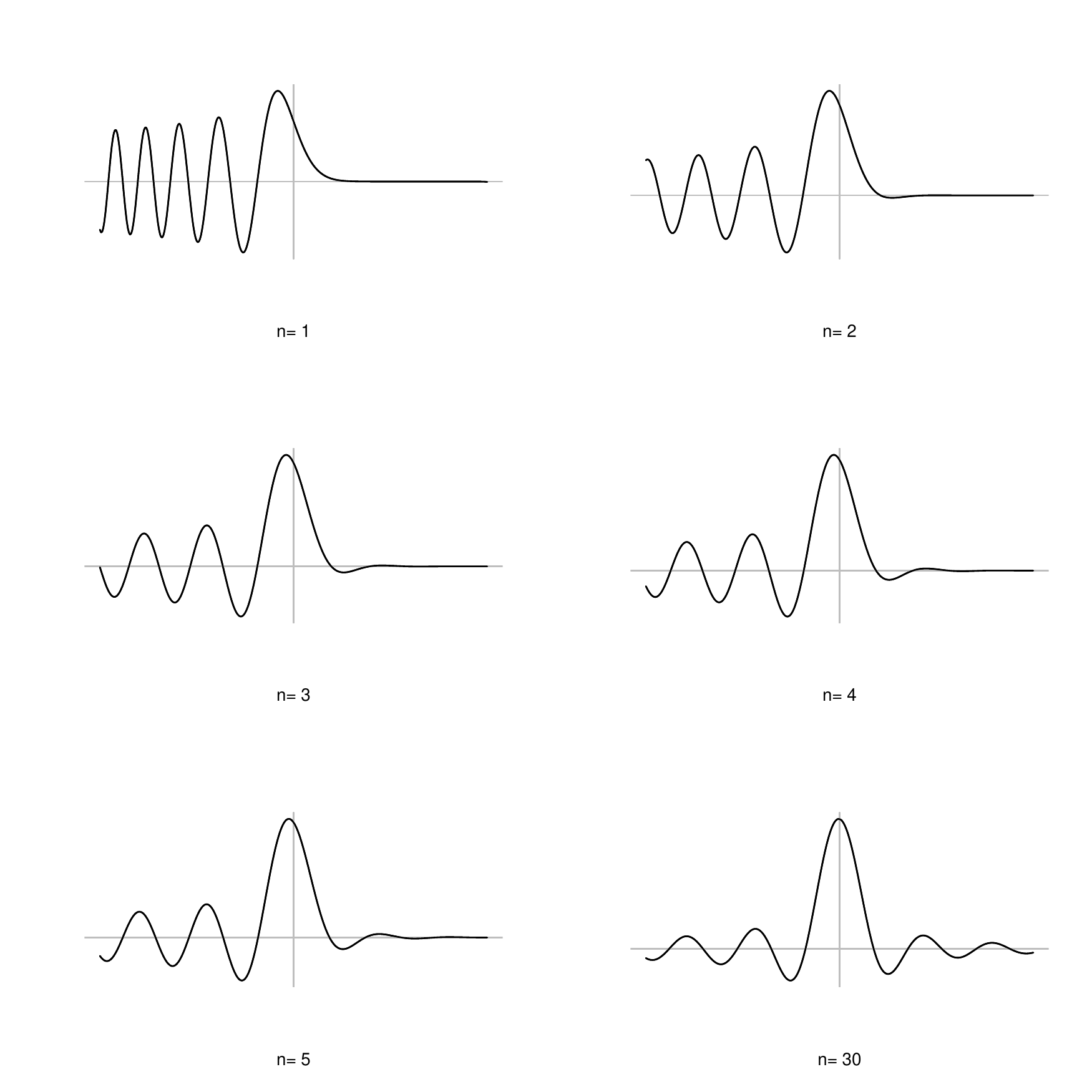} 
\caption{Profiles of odd-order solutions, $m=2n+1$}
\end{figure}

\end{document}